\newlength{\bibitemsep}\setlength{\bibitemsep}{.2\baselineskip plus .05\baselineskip minus .05\baselineskip}
\newlength{\bibparskip}\setlength{\bibparskip}{0pt}
\let\oldthebibliography\thebibliography
\renewcommand\thebibliography[1]{%
  \oldthebibliography{#1}%
  \setlength{\parskip}{\bibitemsep}%
  \setlength{\itemsep}{\bibparskip}%
}
\definecolor{dkgreen}{rgb}{0,0.6,0}
\definecolor{gray}{rgb}{0.5,0.5,0.5}
\definecolor{mauve}{rgb}{0.58,0,0.82}
\tiny\color{gray},
\newtheorem{theorem}{Theorem}[section]
\newtheorem{proposition}[theorem]{Proposition}
\theoremstyle{definition}
\theoremstyle{remark}
\DeclareMathOperator*{\argmin}{arg\,min}
\DeclareMathOperator*{\argmax}{arg\,max}
\DeclareMathOperator*{\jaco}{J}
\DeclareMathOperator*{\e}{e}
\newcommand{\x}{w}
\newcommand{\w}{w}
\newcommand{\range}[1]{\{1, \dots, #1\}}
\DeclareMathOperator*{\R}{\mathbb{R}}
\DeclareMathOperator*{\RR}{\mathbb{R}}
\DeclareMathOperator*{\Rd}{\mathbb{R}^d}
\DeclareMathOperator*{\Rp}{\mathbb{R}^p}
\DeclareMathOperator*{\Rq}{\mathbb{R}^q}
\title{First-order Optimization for Superquantile-based Supervised Learning}
\name{%
    Yassine Laguel$^{\star}$%
    \qquad J\'{e}r\^{o}me Malick$^{*}$%
    \qquad Zaid Harchaoui$^{\dagger}$%
 \thanks{The authors gratefully acknowledge support from NSF CCF 1740551, DMS 1839371, and faculty research awards.}}
\address{%
    $^{\star}$UGA, Lab. J. Kuntzmann, Grenoble, France
 \\%
    $^{*}$CNRS, Lab. J. Kunztmann, Grenoble, France \\%
    $^{\dagger}$ University of Washington, Seattle, USA
}
\begin{document}


\newcommand{\mathbbm}[1]{\text{\usefont{U}{bbm}{m}{n}#1}}
\newcommand{\vertiii}[1]{{\left\vert\kern-0.25ex\left\vert\kern-0.25ex\left\vert #1 
    \right\vert\kern-0.25ex\right\vert\kern-0.25ex\right\vert}}
\newcommand{\trans}[1]{#1^\top}
\newcommand{\jac}{\jaco\!L}

\newcommand{\integerbound}[1]{\{#1 \}}

\maketitle

\begin{abstract}
Classical supervised learning via empirical risk (or negative log-likelihood) minimization hinges upon the assumption that the testing distribution coincides with the training distribution. This assumption can be challenged in modern applications of machine learning in which learning machines may operate at prediction time with testing data whose distribution departs from the one of the training data. 
We revisit the superquantile regression method by proposing a first-order optimization algorithm to minimize a superquantile-based learning objective. The proposed algorithm is based on smoothing the superquantile function by infimal convolution.
Promising numerical results 
illustrate the interest of the approach towards safer supervised learning.
\end{abstract}
\begin{keywords}
supervised learning; risk measure; distributional robustness; nonsmooth optimization
\end{keywords}
\section{Introduction}
\label{sec:intro}
Classical supervised learning assumes that, at training time, we have access to examples  $(x_1,y_1), \dots, (x_n, y_n)$ drawn i.i.d. from a distribution $\mathbb{P}$, and that at testing time, we may face a new example, also drawn from $\mathbb{P}$. The learned predictor or function can be used by humans or machines to make decisions, or used in as an intermediate component in a greater data processing and computing system.

This common framework is currently challenged by important domain applications~\cite{recht2019imagenet}, in which several of the standard assumptions turn out to be unrealistic or simply incorrect. We may not face the same distribution at test time as we did at training time (train-test distribution shift). Recent failures of learning systems when operating in unknown environments~\cite{metz2018microsoft,knight2018selfdriving} underscore the importance of reconsidering the learning objective used to train learning machines in order to ensure robust behavior in the face of unexpected distributions at prediction time. 

The generalized regression framework presented in \cite{rockafellar2008risk} provides an attractive ground to design learning machines displaying increased robustness in the face of unexpected testing distributions. The framework hinges upon the notion of superquantile, a statistical summary of a distribution tail~\cite{lee2018minimax,duchi2019variance,kuhn2019wasserstein}. This notion of robustness is aligned with the one in distributionally robust optimization~\cite{ben2009robust} and empirical likelihood estimation~\cite{owen2001empirical}. It is, however, different, from notions of robustness commonly considered in robust statistics~\cite[Sec. 12.6]{ben2009robust}.

The superquantile is a risk measure, a family of statistical summaries of distribution tails, well studied in economics and finance~\cite{rockafellar2000optimization,ben2007old}.
The quantity is, however, a nonsmooth function. 
We present here a simple approach, based on infimal convolution smoothing, which allows one to easily adapt state-of-the-art gradient-based optimization algorithms for classical supervised learning to the superquantile-based learning framework. Moreover, we provide a companion software package in Python available here~\url{https://github.com/yassine-laguel/spqr}~. 
\subsection{Superquantile}\label{sec:setting}

Risk measures play a crucial role in optimization under uncertainty, involving problems with an aversion to worst-cases scenarios. Among popular convex risk measures, superquantile (also called Conditional Value at Risk) has received a special attention because of its nice convexity properties; see e.g.\;the
textbook~\cite[Chap.\;6]{shapiro2014lectures}. 

We use here the notation and terminology of Rockafellar and Royset\;
\cite{rockafellar2013superquantiles}. 
The $p$-quantile $Q_p(U)$ of a random variable $U$ is defined as the general inverse of the cumulative distribution of $U$. More precisely, for a random variable $U$ (admitting a second order moment),
the cumulative distribution function $F_U\colon \mathbb{R} \rightarrow [0,1]$ is defined as $F_U(x)=\mathbb{P}(U \leq x)$. For any $p \in [0,1]$, the $p$-quantile $Q_p(U)$ and the $p$-superquantile $\bar{Q}_p(U)$, are respectively defined by
\begin{equation}\label{eq:def_cvar}
\begin{split}
Q_p(U)  &= \min \{x \in \mathbb{R}, F_U(x) \geq p  \}  \\
\bar{Q}_p(U) &= \frac{1}{1-p} \int_{p^\prime=p}^1 Q_{p^\prime}(U) \mathrm{d}p^\prime. \\
\end{split}
\end{equation}
The superquantile is, therefore, 
a measure of the upper tail. The parameter $p$ allows one to control the sensitivity to risk. The 
superquantile enjoys a dual representation\;\cite{follmer2002convex}
\begin{equation}\label{eq:def_max_cvar}
\bar{Q}_p(U) = \max_{{\substack{0\leq q(\cdot) \leq \frac{1}{1-p}\\ \int_\Omega q\, d\,\mathbb{P}(\nu) = 1}}} \int_{\nu \in \Omega} U(\nu) q(\nu) \mathrm{d} \mathbb{P}(\nu) 
\end{equation}
Interestingly, the dual formulation uncovers another interpretation of the superquantile learning objective relating it to the re-weighting of the terms in the empirical risk. In practice, the ambiguity on the data distribution may be formalized before training, for instance by incorporating side information (geographical and/or temporal for instance) that drives the heterogeneity of the data. Superquantile learning is expected to produce models that perform better in case of distributional shifts between the training time and the testing time, compared to models trained using standard empirical risk minimization. 


\subsection{Superquantile-based learning}

We are interested in a supervised machine learning setting with training data $\mathcal D = (x_i,y_i)_{1 \leq i \leq n} \in (\Rp \times \Rq)^n$, 
a prediction function $\varphi: \Rd \times \Rp \rightarrow \Rq$ (such as a linear model or a neural network) and a loss function $\ell: \Rq \times \Rq \rightarrow \R$ (such as the logistic loss or the least-squares loss).  
The classical empirical risk minimization writes
\begin{equation}\label{eq:ERM}
\min_{w \in \Rd} \mathbb{E}_{(x_i,y_i)\sim \mathcal{D}}\left(\ell(y_i, \varphi(w,x_i )\right).
\end{equation}

A natural approach consists then in replacing the expectation in \eqref{eq:ERM} by the superquantile \eqref{eq:def_cvar} in the case of discrete distributions standing for the training data
\begin{equation}\label{eq:ESM}
\min_{w \in \Rd} {[{\bar Q}_{p}]}_{(x_i,y_i)\sim \mathcal{D}}\big(\ell(y_i, \varphi(w,x_i )\big)
\end{equation}
Introducing $L^i(w) =\ell(y_i, \varphi(w,x_i))$ and 
$L(w)= (L^i(w))_i$, we simply write the 
superquantile optimization problem as
\begin{equation}\label{eq:general_problem}
\min_{\x \in \mathbb{R}^d} ~~f(\x) = \bar{Q}_p(L(\x)).
\end{equation}

Note that the objective function can specified by expressing the superquantile in its dual formulation \eqref{eq:def_max_cvar} for the discrete distribution
\begin{align*}\label{eq:def_max_cvar_discrete}
f(\w)&= \sup_{q \in K_p} \sum_{i=1}^n q_i L^i(\x) \quad \text{with}\\
 K_p &= \left\{q \in \mathbb{R}^n, \sum_{i=1}^n q_i = 1,  q_i\in \left[0, \frac{1}{n(1-p)}\right]\; \forall i  \right\}.
\end{align*}
This representation is central to the implementation as we shall see in Sec. 3. 
Existing works on minimizing  
superquantiles considered linear programming 
or convex programming 
including interior point algorithms; see\;\cite{rockafellar2014superquantile}.
Our approach considers first-order algorithms instead; although natural, this work seems to be the first one to do so.




    


\section{Smoothing the Superquantile}\label{sec:sub-grad-expr}
In this section, we study the differentiability properties of the superquantile objective~\eqref{eq:general_problem}. We first derive the expression of the subdifferential, when the $L^i$-s are convex\footnote{Convexity of the $L^i$-s is guaranteed when e.g.\;the model $\varphi$ is linear and the loss $\ell$ is convex with respect to its second variable, as for the $l_2$-squared loss and the the cross-entropy loss.}. Then, when $L^i$ are smooth (and possibly nonconvex), we show how to smooth the superquantile
by infimal convolution in order to apply gradient-based optimization algorithms~\cite{nesterov2005smooth}.


\subsection{Subdifferential expression}

The superquantile risk measure\;\eqref{eq:general_problem}
is usually nonsmooth and computing its subdifferential (or even a single subgradient) is not straightforward. 
Using the dual reformulation\;\eqref{eq:def_max_cvar}, we get the expression of the entire subdifferential for the convex case.
Note that gradients of superquantile-based functions for general distributions are obtained, with advanced tools, in~\cite{ruszczynski2006optimization}. Interestingly, the nonsmoothness of these functions arises only with discrete distributions.

\begin{proposition}\label{thm:sub}
Assume 
the model $\varphi$ and the loss $\ell$ are such that the $L^i$ are convex. For $\x \in \mathbb{R}^d$, let $I_p(\x)$ be the set of indices $i \in \{1,\dots, n\}$ such that $L^i(\x) = Q_p(L(\x))$. Then the subdifferential reads as a Minkowski sum
\begin{equation*}
\begin{split}
    \partial f(\x) & = \frac{1}{1-p} \sum_{\substack{i \in \{1,\dots n\} \\ L^i(\x) > Q_p(L(\x))}} \frac{\partial L^i(\x)}{n} \\ 
    & + \bigg\{  \frac{1}{1-p} \sum_{i \in I_p(\x)} \alpha_i \frac{\partial L^i(\x)}{n} , \alpha_i \in [0,1]\, \forall i \in I_p(\x)\\
    & \frac{1}{n} \sum_{i\in I_p(\x)} \alpha_i = \frac{1}{n}\sum_{i=1}^n \mathbbm{1}_{L^i(\x) \leq Q_p(L(\x))}  - p  \bigg\}
\end{split}
\end{equation*}
In particular, when $L$ is differentiable at $\x$, $f$ is differentiable at\;$\x$ if and only if the set  $I_p(\x)$ is reduced to a singleton. 
\end{proposition}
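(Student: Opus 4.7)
My plan is to leverage the discrete dual representation of the superquantile displayed just above, namely
\[
f(w) \;=\; \max_{q \in K_p} \; \sum_{i=1}^n q_i L^i(w),
\]
together with a Danskin-type subdifferential calculus. Since each $L^i$ is convex and each coordinate of $q \in K_p$ is non-negative, the inner objective $g(w,q) := \sum_i q_i L^i(w)$ is convex in $w$ for every $q$, linear in $q$ for every $w$, and jointly continuous on $\Rd \times K_p$. Thus $f$ is a supremum of convex functions over a compact convex set, and a classical max-of-convex formula applies:
\[
\partial f(w) \;=\; \conv \bigcup_{q^* \in \mathcal{Q}^*(w)} \sum_{i=1}^n q^*_i \, \partial L^i(w),
\]
where $\mathcal{Q}^*(w) \subseteq K_p$ is the set of inner maximizers and the inner sum uses the subdifferential sum rule (valid since each $L^i$ is finite-valued convex and $q^*_i \ge 0$).

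The heart of the proof is then to describe $\mathcal{Q}^*(w)$ explicitly. Maximizing $\langle q, L(w) \rangle$ over the polytope $K_p$ is a small linear program solved by a greedy assignment: put the maximum allowed mass $1/[n(1-p)]$ on each index with $L^i(w) > Q_p(L(w))$, zero mass on each index with $L^i(w) < Q_p(L(w))$, and for $i \in I_p(w)$ set $q^*_i = \alpha_i/[n(1-p)]$ with $\alpha_i \in [0,1]$. The normalization $\sum_i q^*_i = 1$ then forces
\[
\sum_{i \in I_p(w)} \alpha_i \;=\; \big|\{i: L^i(w) \le Q_p(L(w))\}\big| - n p,
\]
which after division by $n$ is exactly the identity appearing in the proposition. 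Feeding this characterization into the Danskin formula, the indices outside $I_p(w)$ produce the fixed Minkowski term, while the indices in $I_p(w)$ give the $\alpha$-parametrized term. The outer $\conv$ can be dropped because the set $\{\sum_{i\in I_p(w)} \alpha_i g_i : g_i \in \partial L^i(w),\; \alpha \in A\}$, with $A$ the affine slice of $[0,1]^{|I_p(w)|}$ just described, is already convex by a direct reweighting argument (rescale $z_i = (t\alpha_i x_i + (1-t)\beta_i y_i)/\gamma_i$ with $\gamma_i = t\alpha_i + (1-t)\beta_i$, which keeps $z_i \in \partial L^i(w)$).

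Finally, a convex function is differentiable at $w$ exactly when $\partial f(w)$ is a singleton. Under differentiability of $L$, each $\partial L^i(w) = \{\nabla L^i(w)\}$, so the only residual degree of freedom in the subdifferential formula is the multi-parameter $(\alpha_i)_{i \in I_p(w)}$; this collapses exactly when $|I_p(w)| = 1$, yielding the stated differentiability claim. The main obstacle I foresee is Step~2: a careful justification of the greedy structure of the inner LP and the bookkeeping that recasts the mass-conservation constraint in the precise form given in the statement. Once that is done, the application of the Danskin-type formula and the singleton-subdifferential argument are standard.
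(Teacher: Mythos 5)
Your proof is correct and takes a genuinely different route from the paper's. The paper never touches the dual polytope $K_p$: it starts from the primal Rockafellar--Uryasev representation $f(w)=\min_{\eta\in\R}h(w,\eta)$ with $h(w,\eta)=\eta+\tfrac{1}{n(1-p)}\sum_{i}\max(L^i(w)-\eta,0)$, computes $\partial h$ by the finite max- and sum-rules, and then applies the marginal-function rule (Corollary~4.5.3 in Hiriart-Urruty--Lemar\'echal) at the partial minimizer $\eta=Q_p(L(w))$; the quantile enters as the optimal $\eta$ rather than as a threshold in a linear program. You instead take the dual representation $f(w)=\max_{q\in K_p}\langle q,L(w)\rangle$, invoke a Danskin/Ioffe--Tikhomirov formula for a supremum of convex functions, and solve the inner fractional-knapsack LP explicitly; the normalization $\sum_i q_i=1$ then produces exactly the bookkeeping identity on $\sum_{i\in I_p(w)}\alpha_i$ that the paper derives in its final display. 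Both are sound. The step you flag as the remaining obstacle (the greedy characterization of $\mathcal{Q}^*(w)$) does go through: the definition of $Q_p$ as a generalized inverse gives $\#\{i:L^i(w)>Q_p(L(w))\}\le n(1-p)<\#\{i:L^i(w)\ge Q_p(L(w))\}$, which is precisely the feasibility needed for your assignment, and your reweighting argument for dropping the $\conv$ hull is valid (one may also note that since $K_p$ is a polytope the supremum reduces to a finite max over its vertices, so the elementary finite max-rule suffices). Your route buys the reweighting interpretation and a direct link to the polytope $K_p$ reused in the smoothing of Section~2.2; the paper's route avoids solving the LP and the convexity argument for the hull. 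One caveat, shared with the paper which offers no argument for it at all: the ``only if'' half of the differentiability claim needs a nondegeneracy proviso --- if $\tfrac1n\#\{i:L^i(w)\le Q_p(L(w))\}=p$ exactly, the constraint forces all $\alpha_i=0$ and $\partial f(w)$ is a singleton even when $|I_p(w)|>1$ --- so ``collapses exactly when $|I_p(w)|=1$'' is slightly too strong as stated.
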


\begin{proof}
The proof consists in applying various convex calculus rules, taken from the textbook~\cite[Chap\;D]{hiriart2013convex}. First 
we apply
Theorems\;4.1.1 and\;4.4.2 
to $h_i(\x, \eta)=\max(L^i(\x) - \eta)$
\begin{equation*}
    \partial h_i(x, \eta) = \{(\partial L^i(\x), -1) (\mathbbm{1}_{L^i(\x) > \eta} + \alpha \mathbbm{1}_{L^i(\x) = \eta}),\; \alpha \in [0, 1]\}
\end{equation*}
We apply
Theorem\;4.1.1 
with $h(\x, \eta) =\eta + \frac{1}{n(1-p)} \sum_{i=1}^n h_i(\x, \eta)$
\begin{equation*}
\begin{split}
    \partial h(\x, \eta) =  &\left\{ \left(\frac{1}{1-p} \sum_{i=1}^n \frac{\partial L^i(\x)}{n} (\mathbbm{1}_{L(\x)_i > \eta} + \alpha_i \mathbbm{1}_{L^i(\x) = \eta}),  \right. \right.\\
    &  \left. 1 - \frac{1}{1-p} \sum_{i=1}^n \frac{1}{n} (\mathbbm{1}_{L^i(\x) > \eta} + \alpha_i \mathbbm{1}_{L^i(\x) = \eta}) \right), \\
    & \left. \phantom{\sum_{i=1}^n} \alpha_i \in [0,1], \quad \forall i \in \range{n} \right\}. \\
\end{split}
\end{equation*}
By \cite{rockafellar2000optimization}, $f$ satisfies  $f(\x)\!=\!\min_{\eta \in \mathbb{R}} h(\x, \eta)$, with $Q_p(L(w))\!=\!\argmin_{\eta \in \mathbb{R}} h(\x, \eta)$. We can thus apply Corollary\;4.5.3\;to\;get
\begin{equation*}
\begin{split}
    \partial f(\x) =  &\left\{\frac{1}{1-p} \sum_{i=1}^n \frac{\partial L^i(\x)}{n}   \delta^i(w,\alpha)~~~\text{with $\alpha$ s.t.} \right.\\
    & \left. 0 = 1 - \frac{1}{1-p} \sum_{i=1}^n \frac{\delta^i(w,\alpha)}{n} \text{ and }   \alpha_i \in [0,1], \forall i 
    \right\} \\
\end{split}
\end{equation*}
with $\delta^i(w,\alpha) = (\mathbbm{1}_{L^i(\x) >  Q_p(L(\x))} + \alpha_i \mathbbm{1}_{L^i(\x) =  Q_p(L(\x))})$.
Observe finally that for any sequence $(\alpha_i)_{1\leq i \leq n}$
\begin{equation*}
\begin{split}
     0 = 1 - & \frac{1}{1-p} \sum_{i=1}^n \frac{\delta^i(w,\alpha)}{n}\\
    \Leftrightarrow  \frac{1}{n}\sum_{i \in \mathcal{I}_p(z)} \alpha_i &= 1 - p - \sum_{i=1}^n \frac{1}{n} \mathbbm{1}_{L^i(\x) >  Q_p(L(\x))}  \\
    \Leftrightarrow \frac{1}{n} \sum_{i \in \mathcal{I}_p(z)} \alpha_i &= 1 - p - (1 - \mathbb{P}[L(\x) \leq  Q_p(L(\x))])\\
    \Leftrightarrow  \frac{1}{n} \sum_{i \in \mathcal{I}_p(z)} \alpha_i &= \frac{1}{n}\sum_{i=1}^n \mathbbm{1}_{L^i(\x) \leq Q_p(L(\x))} - p  \\
\end{split}
\end{equation*}
which yields the result.
\end{proof}

Thus, the computation of a subgradient can be performed in linear time: the cost essentially stems from the computation of the quantile $Q_p(L(w))$ and the sum of vectors in $\RR^d$ (assuming such sums can be computed in constant time).

\subsection{Gradient of smoothed approximation}\label{sec:smooth_approximation}

As shown in Proposition~\ref{thm:sub}, the objective function is not differentiable in general (even when $L$ is differentiable), and we propose to smooth it using infimal convolution as in~\cite{nesterov2005smooth}. More precisely, we
follow the methodology of~\cite{doi:10.1137/100818327} and we propose to smooth only the superquantile $\bar Q_p$ rather than the whole function\;$f$. Given formulation \eqref{eq:def_max_cvar}, we introduce
\begin{equation}\label{eq:def_f_mu}
    f_{\mu}(\x) = \max_{q \in K_p} \sum_{i=1}^n q_i \; L^i(\x)
    - \mu\ d(q) \qquad\text{for $\mu>0$} 
\end{equation}
where $d:\mathbb{R}^n \rightarrow \mathbb{R}$ is a fixed non-negative strongly convex function that satisfies $\min_{q \in K} d(q) = 0$.
In this paper, we consider the euclidean distance to the uniform probability measure and the entropic penalty function
\begin{equation*}
d(q) = \frac{1}{2} \left\|q - \frac{1}{n}\e\right\|^2 
\quad\text{and}\quad
d(q) = \log(n) + \sum_{i=1}^n q_i \log(q_i) 
\end{equation*}
where $e=(1, \dots,1)^\top$ is the usual vectors of all ones.
As a direct application of~\cite[Th.\;1]{nesterov2005smooth}, we have the following proposition establishing that $f_\mu$ is a smooth approximation of $f$.

\begin{proposition}[Gradient of smoothed approximation]\label{thm:smooth_approx}
Assume the model $\varphi$ and the loss $\ell$ are such that the $L^i$ are smooth for any $i$. 
In the above setting, the convex function $f_\mu$ provides a global approximation of $f$, i.e.
$f_\mu(\x) \leq f(\x) \leq  f_\mu(\x) + \frac{\mu}{2}$ for any $\x \in \mathbb{R}^d$.
If $L$ is differentiable, then $f_\mu$ is differentiable as well, with 
\begin{equation}\label{eq:grad_phi_mu}
\nabla f_\mu(\x) = \jac(\x)^T q_\mu(\x),
\end{equation}
where $\jac(\x)$ is the Jacobian of $L$ at $\x$ and $q_\mu(\x)$ is the optimal solution of~\eqref{eq:def_f_mu}, unique by
strong convexity of $d$.
\end{proposition}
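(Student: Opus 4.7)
The plan is to treat the proposition as the standard output of the Nesterov smoothing machinery applied to the dual formulation of $f$. Writing $f(\x) = \max_{q \in K_p} \langle q, L(\x) \rangle$ exhibits $f$ as the pointwise maximum over the compact convex set $K_p$ of a linear-in-$q$ quantity, and $f_\mu(\x) = \max_{q \in K_p} \langle q, L(\x)\rangle - \mu\, d(q)$ is its regularization by the strongly convex prox-function $d$. This is exactly the setting of Theorem 1 in~\cite{nesterov2005smooth}, which will give both the approximation bound and the gradient formula in one shot.

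For the approximation inequalities I would argue directly, without invoking the theorem. The bound $f_\mu(\x) \leq f(\x)$ is immediate since $d \geq 0$ and $K_p$ is the same feasible set in both problems. For the reverse inequality, let $q_\mu(\x)$ denote the (unique, by strong convexity) maximizer in~\eqref{eq:def_f_mu}; then
\begin{equation*}
f(\x) \leq \langle q_\mu(\x), L(\x)\rangle \leq f_\mu(\x) + \mu\, d(q_\mu(\x)) \leq f_\mu(\x) + \mu \max_{q\in K_p} d(q),
\end{equation*}
so it suffices to check that $\max_{q \in K_p} d(q) \leq 1/2$. For the euclidean prox-function this is a short computation: $K_p$ is contained in the probability simplex, and the furthest point of the simplex from its center $e/n$ is a vertex, giving $\tfrac{1}{2}(1-1/n)\leq 1/2$; the constraint $q_i \leq 1/(n(1-p))$ can only shrink the maximum. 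For the entropic prox-function an analogous bound holds with $d$ rescaled, absorbed into the choice of $\mu$.

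For the differentiability claim I would apply Danskin's theorem (which is the ingredient underlying Nesterov's Theorem 1). The inner objective $g(\x,q) = \langle q, L(\x)\rangle - \mu d(q)$ is jointly continuous, continuously differentiable in $\x$ for every fixed $q$ with $\nabla_\x g(\x,q) = \jac(\x)^\top q$, and strongly concave in $q$ on the compact convex set $K_p$. Strong concavity yields the unique maximizer $q_\mu(\x)$ and, via the maximum theorem, its continuity in $\x$. Danskin's theorem then gives that $f_\mu$ is differentiable at $\x$ with
\begin{equation*}
\nabla f_\mu(\x) = \nabla_\x g(\x, q_\mu(\x)) = \jac(\x)^\top q_\mu(\x),
\end{equation*}
which is the stated formula.

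The main obstacle is bookkeeping rather than mathematical: verifying the explicit constant $\max_{q\in K_p} d(q) \leq 1/2$ for the prox-functions actually used, and cleanly justifying the application of Danskin (uniqueness of the maximizer plus smoothness of $L$) so that the gradient expression is unambiguous. Everything else reduces to quoting~\cite[Th.\;1]{nesterov2005smooth}.
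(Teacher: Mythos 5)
Your overall route is the same as the paper's: the paper gives no detailed argument and simply invokes \cite[Th.\;1]{nesterov2005smooth}, of which your Danskin-plus-prox-diameter argument is exactly the unpacked content. The gradient part is fine: strong concavity of $q\mapsto \langle q,L(\x)\rangle-\mu d(q)$ on the compact set $K_p$ gives a unique, continuous maximizer, and Danskin's theorem yields $\nabla f_\mu(\x)=\jac(\x)^\top q_\mu(\x)$. Your computation $\max_{q\in K_p} d(q)\le \tfrac12(1-1/n)\le\tfrac12$ for the Euclidean prox-function is also correct, and you are right to flag that the entropic penalty only satisfies the stated $\mu/2$ bound after rescaling (the paper glosses over this).

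There is, however, one step that fails as written: in the chain establishing $f(\x)\le f_\mu(\x)+\mu/2$ you start from $f(\x)\le \langle q_\mu(\x),L(\x)\rangle$, which is backwards --- $q_\mu(\x)$ maximizes the \emph{penalized} objective, so $\langle q_\mu(\x),L(\x)\rangle\le\max_{q\in K_p}\langle q,L(\x)\rangle=f(\x)$, with strict inequality in general (take $\mu$ large, so that $q_\mu(\x)$ is pushed toward the minimizer of $d$). The correct chain evaluates the penalized objective at the maximizer $q^\star$ of the \emph{unpenalized} problem:
\begin{equation*}
f(\x)=\langle q^\star,L(\x)\rangle \le f_\mu(\x)+\mu\, d(q^\star)\le f_\mu(\x)+\mu\max_{q\in K_p} d(q)\le f_\mu(\x)+\tfrac{\mu}{2},
\end{equation*}
where the first inequality is just the definition of $f_\mu$ as a maximum. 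With that one-line repair the proof is complete and matches what the cited theorem delivers.
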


To be made practical, the previous result needs to be equipped with a fast and efficient procedure to solve~\eqref{eq:def_f_mu}. As stated in the next proposition, Algorithm\;\ref{algo_projection} addresses this issue. The procedures follows closely the ones in~\cite{condat2016fast}, where convex duality and one-dimensional search ideas are fruitfully combined.

\begin{algorithm}[t!]
\KwInit{$u= L(\x) + \frac{\mu}{n} \e$, ~~$\ell = \frac{1}{n(1-p)}$,~~$q_\mu = 0\in \mathbb{R}^n$ $\mathcal{P} = \{u_i, i \in \{1,\dots,n\}\} \cup \{u_i -  \mu \ell, i \in \{1,\dots,n\}\}$
}

\vspace*{-1.5ex}
Find $a := \max \left\{s \in \mathcal{P},\theta'(s) \leq 0 \right\}$\\ \,~~~~~~~~$b := \min \left\{s \in \mathcal{P}, \theta'(s) > 0 \right\}$\;
    \uIf{$\theta'(a) = 0$}{
        $\lambda := a$\;        
        }
    \uElse{
        $\lambda := a - \frac{\theta'(a)(b-a)}{\theta'(b) - \theta'(a)}$
        }
\For{$1 \leq k \leq n$}{
  \uIf{$\lambda < u_k - \mu \ell$}{
    ${[q_\mu]}_k = \ell$\;
  }
  \uElseIf{$u_k - \mu \ell \leq \lambda < u_k$} {
    ${[q_\mu]}_k = \frac{u_k - \lambda}{\mu}$\;
  }
  \Else{
    ${[q_\mu]}_k = 0$
  }
  }
\KwOut{$q_\mu \in \mathbb{R}^n$ : solution of \eqref{eq:def_f_mu}}
  
\caption{Fast subroutine for smoothed oracle}
\label{algo_projection}
\end{algorithm}

\begin{proposition}\label{thm:On}
    Algorithm \ref{algo_projection} computes the optimal solution of the problem \eqref{eq:def_f_mu} (with the euclidean or the entropic penalty) at a cost of $\mathcal{O}(n)$ operations.
\end{proposition}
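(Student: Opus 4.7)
The plan is to derive the formula in Algorithm~\ref{algo_projection} from the KKT conditions of~\eqref{eq:def_f_mu}, and then justify the linear running time through a median-based bracketing argument.

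First I would focus on the Euclidean case. After expanding $\frac{\mu}{2}\|q-e/n\|^2$, the problem~\eqref{eq:def_f_mu} becomes, up to an additive constant,
\begin{equation*}
\max_{\sum_i q_i = 1,\; 0 \le q_i \le \ell}\; \langle u, q\rangle - \frac{\mu}{2}\|q\|^2,
\end{equation*}
with $u=L(w)+\frac{\mu}{n}e$ and $\ell=\frac{1}{n(1-p)}$. Introducing a single scalar multiplier $\lambda$ for the equality constraint and box multipliers for the inequalities, the KKT stationarity condition together with complementary slackness yields three regimes: $q_i=0$ when $\lambda \ge u_i$, $q_i=\ell$ when $\lambda \le u_i-\mu\ell$, and $q_i=(u_i-\lambda)/\mu$ in between. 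This is exactly the For loop in the algorithm, so the only remaining task is to identify $\lambda^\star$.

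Next I would study the function $\theta'(\lambda) := 1 - \sum_i q_i(\lambda)$, where $q_i(\lambda)$ is the piecewise-linear response above. It is continuous, non-decreasing in $\lambda$, and piecewise-affine with breakpoints exactly at the $2n$ elements of $\mathcal{P}$. The admissible $\lambda^\star$ is therefore the unique root of $\theta'(\lambda)=0$, and it lies in the interval $[a,b]$ defined by the two consecutive breakpoints around the sign change. Since $\theta'$ is affine on $[a,b]$, linear interpolation recovers $\lambda^\star$ in closed form, as prescribed.

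The main obstacle is the $\mathcal{O}(n)$ complexity, since a naive sort of $\mathcal{P}$ is $\mathcal{O}(n\log n)$. Following the scheme of~\cite{condat2016fast}, I would locate $a$ and $b$ by a median-based pivoting procedure: compute the median $m$ of $\mathcal{P}$ in linear time using deterministic selection, evaluate $\theta'(m)$ by a single scan over the coordinates, and discard the half of $\mathcal{P}$ on the wrong side of $m$. Recursion on the surviving halves yields work $n + n/2 + n/4 + \cdots = \mathcal{O}(n)$. The concluding For loop is clearly linear, giving the overall $\mathcal{O}(n)$ bound. The entropic penalty is handled by the same template, with the Euclidean closed form for $q_i(\lambda)$ replaced by its truncated-exponential counterpart and the breakpoint set adjusted accordingly; monotonicity of $\theta'$ and the bracketing argument remain unchanged.
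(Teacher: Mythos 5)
Your proposal follows essentially the same route as the paper's proof: a single scalar multiplier for the constraint $\sum_i q_i = 1$, coordinate-wise maximization over the box $[0,\ell]$ yielding the three regimes of the For loop, and then root-finding on the monotone, continuous, piecewise-affine function $\theta'$ by bracketing between consecutive breakpoints of $\mathcal{P}$ and linear interpolation. (Writing the KKT conditions of the primal versus maximizing the Lagrangian in $q$ for fixed $\lambda$ are two phrasings of the same computation.) The one place where you genuinely diverge is the complexity claim, and there your version is the more complete one: the paper simply asserts that ``the search of $a$ and $b$'' costs $\mathcal{O}(n)$, even though a literal implementation of $a = \max\{s\in\mathcal{P} : \theta'(s)\le 0\}$ by evaluating $\theta'$ at all $2n$ breakpoints would cost $\mathcal{O}(n^2)$, and sorting $\mathcal{P}$ first would cost $\mathcal{O}(n\log n)$. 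Your median-of-medians pruning argument (with the implicit bookkeeping of partial sums for the discarded breakpoints) is exactly the mechanism from \cite{condat2016fast} that the paper gestures at, and it is what actually delivers the stated $\mathcal{O}(n)$ bound. One small caveat on your closing remark: for the entropic penalty the per-coordinate response is a capped exponential, so $\theta'$ is piecewise smooth rather than piecewise affine; the bracketing and monotonicity arguments survive, but the final interpolation step must be replaced by solving $\sum_{i} e^{(u_i-\lambda)/\mu} = c$ on the active piece (still closed form via a logarithm), a detail both you and the paper leave implicit.
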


\begin{proof}
We detail the proof for $d(q)=\frac{1}{2} \|q \;- 1/n\,\e \|^2$; the second 
case of the entropy follows the same lines. 
We dualize the constraint $\sum_{{i=1}}^n q_i - 1 = 0$ to get the Lagrangian:
\begin{equation*}
\mathscr{L}(q,\lambda) = 
\sum_{{i=1}}^n q_i L^i(\x) - \frac{\mu}{2} \sum_{{i=1}}^n \left(q_i - \frac{1}{n}\right)^2 + \lambda \left(1 - \sum_{{i=1}}^n q_i\right).
\end{equation*}
With the notation $\ell$ and $u$ introduced in the algorithm, the dual function writes:
\begin{equation*}
\theta(\lambda) = \max_{\substack{q \in \mathbb{R}^n \\ 0 \leq q_i \leq l}} \mathscr{L}(q,\lambda) =  \lambda - \frac{\mu}{2n} + \sum_{{i=1}}^n \max_{{0 \leq q_i \leq l}} (u_i - \lambda)q_i - \frac{\mu}{2} q_i^2 
\end{equation*}
For $\lambda \in \mathbb{R}$ and $i \in \{1,\dots, n\}$ fixed, let us introduce the function $h_i(q_i) = (u_i - \lambda)q_i - \frac{\mu}{2} q_i^2$. 
Then, we get
\begin{equation}\label{eq:sol_q_i}
\begin{split}
    \argmax_{{0 \leq q_i \leq l}}h_i(q_i) &=  \left\{
    \begin{array}{lll}
        0 &\mbox{ if } \lambda \geq u_i \\
        \frac{u_i - \lambda}{\mu} &\mbox{ if } u_i \geq \lambda \geq u_i - \mu \ell \\
        \ell  &\mbox{ if } \lambda \leq u_i - \mu \ell  \\
    \end{array}
    \right.        
\end{split}
\end{equation}
As a result, we get the explicit expression of $\theta(\lambda)$. Observing that it is differentiable, we get
\begin{equation*}
    \theta'(\lambda) = 1 - \sum_{{i=1}}^n \left(\frac{u_i - \lambda}{\mu} \mathbbm{1}_{u_i \geq \lambda \geq u_i - \mu \ell} + \ell \mathbbm{1}_{u_i - \mu \ell > \lambda}\right).
\end{equation*}
Observe now that $\lim_{{\lambda \rightarrow + \infty}} \theta'(\lambda) = 1$ and since $n\ell =\frac{1}{1-p} > 1$,  $\lim_{{\lambda \rightarrow - \infty}} \theta'(\lambda) < 0$. Therefore, $\theta'$ is a non-decreasing and continuous (piecewise affine) function that takes negative and positive values: by the intermediate value theorem, there exists a solution $\lambda^\star \in \mathbb{R}$ such that $\theta'(\lambda^\star) = 0$. By duality theory, the associated $q^\star$ (the optimal solution of\;\eqref{eq:sol_q_i} for $\lambda= \lambda^\star$) is the solution of the primal problem\;\eqref{eq:def_f_mu}.
Finally, we compute $\lambda^\star$ zeroing $\theta'$. Since $\theta'$ is piecewise affine, we just need to evaluate  $\theta'$ at points belonging to the set $\mathcal{P}$ and at $a$ and $b$ as defined in Algorithm\;\ref{algo_projection}. One can then find $\lambda^\star$ by testing three simple cases (i)
if $\theta'(a) = 0$, take $\lambda^* = a$, 
if $\theta'(b) = 0$, take $\lambda^* = b$,
else, take $\lambda^* = a - \frac{\theta'(a)(b-a)}{\theta'(b) - \theta'(a)}$.

Regarding computational costs, this algorithm boils down to the search of $a$ and $b$, and the assignment of the coordinates of $q_\mu$. This also sums up to a $\mathcal{O}(n)$ cost.
\end{proof}

%

Thus Algorithm\;\ref{algo_projection} provides an efficient oracle for minimizing of $f$ with first-order algorithms.

\section{A Python Toolbox for Superquantile Optimization}\label{sec:spqr}
We provide a Python software package called \texttt{SPQR} to the =community for research in superquantile-based optimization and learning. 
The software package includes optimization and modeling tools to solve problems of the form\;\eqref{eq:general_problem} with just a few lines of code. 
The implementation builds off basic structures of~\texttt{scikit-learn} \cite{scikit-learn}.
The code is publicly available at~\url{https://github.com/yassine-laguel/spqr}.

We describe here the optimization methods used in 
in the toolbox and how to call the basic functions. 
We refer to the online documentation for more details, custom options, and parameter settings. 
 
\subsection{First-order optimization algorithms}

Although stochastic gradient algorithms are popular methods to solve empirical risk minimization problems at scale~\eqref{eq:ERM}, replacing the expectation by the superquantile in \eqref{eq:ESM} completely changes the situation making these algorithms not directly applicable. 
Indeed computing the function values and gradients
requires sorting loss values on the whole data set, which is not directly amenable to classical stochastic gradient algorithms.
This rehabilitates batch optimization algorithms in our context. We cover a variety of methods
\begin{itemize}
    \setlength\itemsep{1mm}
    \item when $L$ is convex: subgradient method and dual averaging.
    We implement in particular the ``weighted'' version of dual averaging with a Euclidean prox-function~\cite{nesterov2009primal}. For an iterate $x_k$ and a gradient $g_k$ of $f$ at $x_k$, the update writes:
    \[
        x_{k+1} = \frac{-s_{k+1}}{\alpha_k}\;\; \text{with}\;\;s_{k+1}=\sum_{i=0}^{k}\frac{g_k}{\|g_k\|} 
    \]
    where $(\alpha_k)_{k\geq 0}$ denotes the tuned step-size of the method. The tuning is carried through a line-search strategy performed at the first iteration.
    To use these algorithms, we provide a subgradient oracle (from Proposition\;\ref{thm:sub}) with the same complexity as computing a quantile (ie.\;$\mathcal{O}(n)$ with $n$ the number of data points).
    \item when $L$ is smooth, we can use the smoothed objective: gradient method, accelerated gradient method and quasi-Newton (BFGS). 
    In particular the accelerated gradient method relies on the following scheme\;\cite{10029946121}:
    \[
    \begin{split}
        \alpha_0 &= 0,\; \alpha_s = \frac{1 + \sqrt{1 + 4\alpha_{s-1})}}{2} \text{ and } \gamma_s = \frac{1 - \alpha_s}{\alpha_{s+1}}\\
        x_{s+1} &= y_s - \frac{1}{\beta} \nabla f(y_s), \;
        y_{s+1} = (1-\gamma_s)x_{s+1} + \gamma_s x_s \\
    \end{split}
    \]
    with $x_0=y_0=0$.
    To use these algorithms, we provide a gradient oracle using Algorithm\;\ref{algo_projection}, again with a $\mathcal{O}(n)$ complexity (Proposition\;\ref{thm:On}).
\end{itemize}

\subsection{Basic usage: input format and execution}

The user provides a dataset 
$(X,Y)\in \mathbb{R}^p\times\mathbb{R}^m$ and an 
oracle for the function $L$ and its gradient. The dataset is stored into two python lists (or numpy arrays) \texttt{X} and \texttt{Y}; for instance, 
for realizations of random variables:
\begin{lstlisting}
    import numpy as np
    X = np.random.rand(100, 2)
    alpha = np.array([1., 2.])
    Y = np.dot(X, alpha) + np.random.rand(100)
\end{lstlisting}
The two python functions \texttt{L} and \texttt{L\_prime} are assumed to be functions of the triplet \texttt{(w,x,y)} where \texttt{w} is the optimization variable and \texttt{(x,y)} a data point. For instance, one can perform risk-sensitive linear regression with:
\begin{lstlisting}
    # Define the loss and its derivative
    def L(w,x,y):
        return 0.5 * np.linalg.norm(y - np.dot(x,w))**2
    def L_prime(w,x,y):
        return -1.0 * (y - np.dot(x,w)) * x 
\end{lstlisting}
Before solving the problem \eqref{eq:general_problem}, we have to instantiate the \texttt{RiskOptimizer} object of \texttt{SPQR} with the two oracles, following standard usage of \texttt{scikit-learn}. The basic instantiation is as follows.
\begin{lstlisting}
    from SPQR import RiskOptimizer
    # Instantiate a risk optimizer object
    optimizer = RiskOptimizer(L, L_prime)
\end{lstlisting}
\texttt{RiskOptimizer} inherits from \texttt{scikit-learn}'s estimators: we use the \texttt{fit} method to run the optimization algorithm on the 
data, 
providing a solution of\;\eqref{eq:general_problem}.
\begin{lstlisting}
    #  Running the algorithm
    optimizer.fit(X,Y)
    lst_iterates = optimizer.list_iterates
    sol = optimizer.solution 
\end{lstlisting}



\section{Numerical Illustrations}\label{sec:numexp}
We compare the proposed approach~\eqref{eq:ESM} with the common approach using empirical risk minimization on synthetic and real data. 
We solve the ordinary least squares problem
\begin{equation*}
    \min_{w\in \mathbb{R}^d} \mathbb{E}_{(x_i, y_i) \sim \mathcal{D}}\big((y_i - w^\top x_i)^2\big)
\end{equation*}
using the corresponding function of \texttt{scikit-learn}
(by calling \texttt{LinearRegression.fit(X,Y)} method). We solve its risk-sensitive counterpart
\begin{equation*}
    \min_{w\in \mathbb{R}^d} [\bar Q_p]_{(x_i, y_i) \sim \mathcal{D}}\big((y_i - w^\top x_i)^2\big)
\end{equation*}
using our toolbox with risk-sensitive linear regression, Euclidean smoothing (with $\mu = 1000$), and
L-BGFS as optimizer (see Sec.~\ref{sec:spqr}). 


\subsection{Synthetic Dataset}\label{sec:synthetic}

We consider a regression task on a synthetic training dataset 
of $n=10^4$ points in $\mathbb{R}^{40} \times \mathbb{R}$. The design matrix $X = (x_i)_{1\leq i\leq n}$ is generated with the \texttt{make\_low\_rank\_matrix} procedure of \texttt{scikit\_learn}\;\cite{scikit-learn} with a rank $30$. For a given model parameter $\bar w \in \mathbb{R}$, we generate the data according to
\begin{equation*}
    y_i = x_i^\top \bar w + \varepsilon_i.
\end{equation*}
The noise $\varepsilon_i$ is defined here as a mixture
\[
    \varepsilon_i = \beta\varepsilon_{\mathcal{N}} +  (1-\beta)\varepsilon_{\mathcal{L}}
\]
where all random variables are independent, $\varepsilon_{\mathcal{N}}$ follows a standard normal distribution, 
$\varepsilon_{\mathcal{L}}$ follows a Laplace distribution with location $\mu=10$ and scale $s=1$, 
and $\beta$ follows a Bernoulli distribution with parameter $p=0.8$. Define the \emph{squared residuals} (or losses) 
\[
r_i^2 = (y_i - w^\top x_i)^2 \quad \text{for } i=1,\ldots, n
\]
and the $p$-quantiles of the empirical distribution of $(r_i^2)_{i=1,\dots,n}$ for $p=0.5$ and $p=0.9$.


\begin{table}[ht]
\vspace{-1mm}
\begin{center}
\begin{adjustbox}{max width=0.9\linewidth}
\begin{tabular}{lccccccc}
\toprule
Model & Mean & \multicolumn{2}{c}{$p$-quantile of the loss} \\
 &  & $p=0.5$ & $p=0.9$\\
\midrule
$\mathbb{E}$ & $16.45$ & $5.55$ &  $60.2$ \\
$\bar Q_p\;-\;p=0.5$ & $18.75$  & $13.9$ &  $41.2$ \\
$\bar Q_p\;-\;p=0.7$ & $22.3$  & $20.7$ &  $36.6$ \\
$\bar Q_p\;-\;p=0.9$ & $23.7$ & $22.5$ &  $37.7$ \\
\bottomrule
\end{tabular}
\end{adjustbox}
\vspace{-1mm}
\caption{Quantiles of the empirical distribution of residuals on the test.~\label{table:synthetic_exp}}
\end{center}
\end{table}

\begin{figure}[ht]
\vspace{-8mm}
\begin{minipage}[b]{1.0\linewidth}
  \centering
  \centerline{\includegraphics[width=8.5cm]{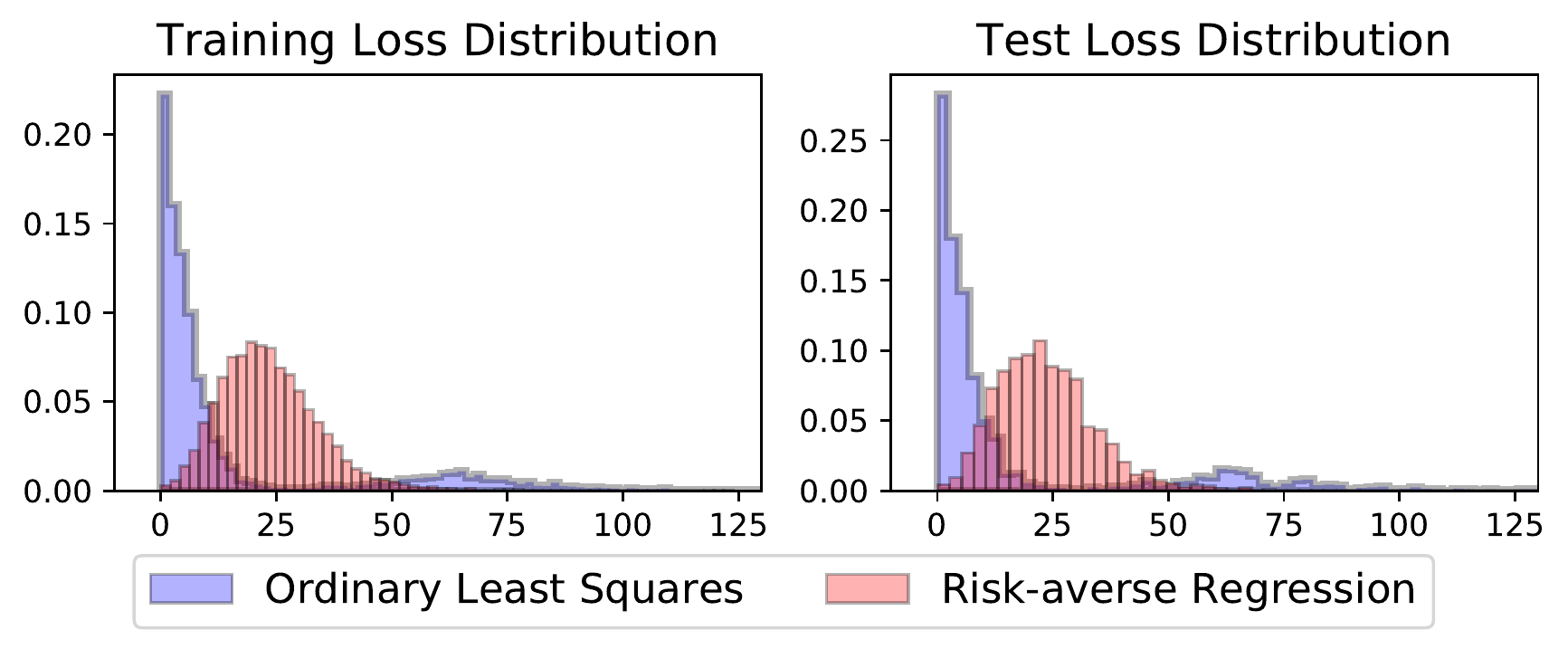}}
\end{minipage}
\vspace{-7mm}
\caption{Quantiles of the empirical distribution of residuals on the test. The risk-sensitive model was trained with $p=0.9$.}
\label{fig:synthetic_dataset}
\end{figure}

We report the $p$-quantiles and the distribution of losses obtained on the training dataset
and on a test dataset of $2000$ data points independently generated with the same procedure; see Table\;\ref{table:synthetic_exp} and Figure\;\ref{fig:synthetic_dataset}. As $p$ grows, the superquantile-based or risk-sensitive model 
shifts the upper tail on errors to the left, 
which shows an improved performance on extreme cases. This comes with the price of 
lower performances 
on inputs well managed by the standard 
approach (see metrics for $p=0.5$ in Table\;\ref{table:synthetic_exp}).

\subsection{Real Dataset}

We consider the superconductivity dataset \cite{hamidieh2018data} which contains the information of $21,263$ superconductors.  The learning task is to predict the critical temperature of a superconductor from the $10$ most important features as selected by \cite{hamidieh2018data}.
We split the dataset into a training set and a testing set with a ratio $80\%/20\%$. 

We report in Figure \ref{fig:real_dataset} the comparison between the quantiles of the testing and training loss distribution respectively. In terms of the quantile at $90\%$, the proposed approach display better statistical behavior on the testing loss than the common approach based on empirical risk minimization. This is in line with the aim of the formulation considered, which seeks to gain a better control on the tails of the loss distribution. 
\begin{table}[ht]
\label{table:real_exp}
\begin{center}
\begin{adjustbox}{max width=0.9\linewidth}
\begin{tabular}{lccccccc}
\toprule
Model & Mean & \multicolumn{3}{c}{$p$-quantile of the loss} \\
 & & $p=0.9$ & $p=0.95$ & $p=0.99$\\
\midrule
$\mathbb{E}$ & $16.5$ & $35.8$ & $42.7$ &  $55.7$ \\
$\bar Q_p\;-\;p=0.8$ & $17.4$ & $34.7$ & $41.0$ &  $53.8$ \\
$\bar Q_p\;-\;p=0.9$ & $18.1$ & $35.6$ & $41.0$ &  $53.6$ \\
$\bar Q_p\;-\;p=0.95$ & $18.9$ & $36.5$ & $41.4$ &  $53.6$ \\
\bottomrule
\end{tabular}
\end{adjustbox}
\end{center}
\vspace{-4mm}
\caption{\small{Metrics of the distribution of the loss values $r_i$ on the test superconductivity dataset}}
\end{table}

\begin{figure}[htb]
\vspace{-2mm}
\begin{minipage}[b]{1.0\linewidth}
  \centering
  \centerline{\includegraphics[width=8.5cm]{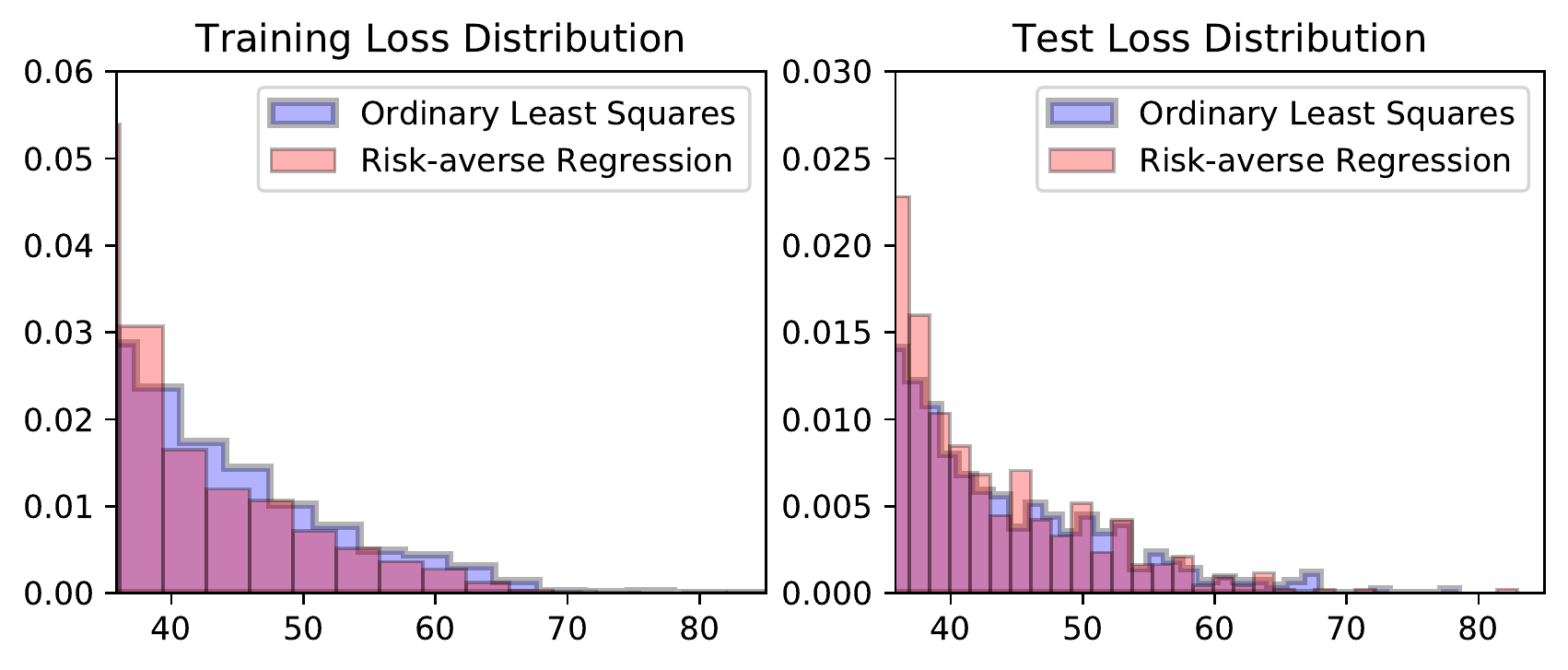}}
\end{minipage}
\caption{Distribution of the loss values $r_i$ on the train and test superconductivity dataset. The risk-sensitive model is trained with $p=0.9$.}
\label{fig:real_dataset}
\end{figure}

\section{Conclusion}
Risk-sensitive optimization plays a major role in the design of safer models for decision-making and has recently gained interest in machine learning. We provide a 
toolbox to tackle superquantile-based learning problems using first-order optimization algorithms.
Numerical illustrations on regression tasks show an improved statistical behavior in terms of higher quantiles of the testing loss.




\bibliographystyle{IEEEbib}
\bibliography{optim,refs}

\begin{thebibliography}{10}

\bibitem{recht2019imagenet}
Benjamin Recht, Rebecca Roelofs, Ludwig Schmidt, and Vaishaal Shankar,
\newblock ``Do imagenet classifiers generalize to imagenet?,''
\newblock {\em arXiv:1902.10811}, 2019.

\bibitem{metz2018microsoft}
Rachel Metz,
\newblock ``Microsoft's neo-{Nazi} sexbot was a great lesson for makers of {AI}
  assistants,''
\newblock {\em Artificial Intelligence}, March 2018.

\bibitem{knight2018selfdriving}
Will Knight,
\newblock ``A self-driving {Uber} has killed a pedestrian in {Arizona},''
\newblock {\em Ethical Tech}, March 2018.

\bibitem{rockafellar2008risk}
R.T. Rockafellar, S.~Uryasev, and M.~Zabarankin,
\newblock ``Risk tuning with generalized linear regression,''
\newblock {\em Mathematics of Operations Research}, 2008.

\bibitem{lee2018minimax}
J.~Lee and M.~Raginsky,
\newblock ``Minimax statistical learning with {W}asserstein distances,''
\newblock in {\em Advances in Neural Information Processing Systems}, 2018.

\bibitem{duchi2019variance}
J.~C. Duchi and H.~Namkoong,
\newblock ``Variance-based {R}egularization with {C}onvex {O}bjectives.,''
\newblock {\em Journal of Machine Learning Research}, 2019.

\bibitem{kuhn2019wasserstein}
D.~Kuhn, P.M. Esfahani, V.~Anh Nguyen, and S.~Shafieezadeh-Abadeh,
\newblock ``Wasserstein distributionally robust optimization: Theory and
  applications in machine learning,''
\newblock in {\em Operations Research \& Management Science in the Age of
  Analytics}. INFORMS, 2019.

\bibitem{ben2009robust}
A.~Ben-Tal, L.~El~Ghaoui, and A.~Nemirovski,
\newblock {\em Robust optimization},
\newblock Princeton University Press, 2009.

\bibitem{owen2001empirical}
A.B. Owen,
\newblock {\em Empirical Likelihood},
\newblock Chapman \& Hall/CRC Monographs on Statistics \& Applied Probability.
  CRC Press, 2001.

\bibitem{rockafellar2000optimization}
T.~Rockafellar and S.~Uryasev,
\newblock ``Optimization of {C}onditional {V}alue-at-{R}isk,''
\newblock {\em Journal of Risk}, 2000.

\bibitem{ben2007old}
A.~Ben-Tal and M.~Teboulle,
\newblock ``An old-new concept of convex risk measures: The optimized certainty
  equivalent,''
\newblock {\em Mathematical Finance}, 2007.

\bibitem{shapiro2014lectures}
A.~Shapiro, D.~Dentcheva, and A.~Ruszczy{\'n}ski,
\newblock {\em Lectures on stochastic programming: modeling and theory},
\newblock SIAM, 2014.

\bibitem{rockafellar2013superquantiles}
R.~T. Rockafellar and J.~O Royset,
\newblock ``Superquantiles and their applications to risk, random variables,
  and regression,''
\newblock in {\em Theory Driven by Influential Applications}. INFORMS, 2013.

\bibitem{follmer2002convex}
H.~F{\"o}llmer and A.~Schied,
\newblock ``Convex measures of risk and trading constraints,''
\newblock {\em Finance and stochastics}, 2002.

\bibitem{rockafellar2014superquantile}
R.T. Rockafellar, J.O. Royset, and S.I. Miranda,
\newblock ``Superquantile regression with applications to buffered reliability,
  uncertainty quantification, and conditional value-at-risk,''
\newblock {\em European Journal of Operational Research}, 2014.

\bibitem{nesterov2005smooth}
Y.~Nesterov,
\newblock ``Smooth minimization of non-smooth functions,''
\newblock {\em Mathematical programming}, 2005.

\bibitem{ruszczynski2006optimization}
A.~Ruszczy{\'n}ski and A.~Shapiro,
\newblock ``Optimization of convex risk functions,''
\newblock {\em Mathematics of operations research}, 2006.

\bibitem{hiriart2013convex}
J.-B. Hiriart-Urruty and C.~Lemar{\'e}chal,
\newblock {\em Convex analysis and minimization algorithms I: Fundamentals},
\newblock Springer science \& business media, 2013.

\bibitem{doi:10.1137/100818327}
A.~Beck and M.~Teboulle,
\newblock ``Smoothing and first order methods: A unified framework,''
\newblock {\em SIAM Journal on Optimization}, 2012.

\bibitem{condat2016fast}
L.~Condat,
\newblock ``Fast projection onto the simplex and the l1 ball,''
\newblock {\em Mathematical Programming}, 2016.

\bibitem{scikit-learn}
F.~Pedregosa et~al.,
\newblock ``Scikit-learn: Machine learning in {P}ython,''
\newblock {\em Journal of Machine Learning Research}, 2011.

\bibitem{nesterov2009primal}
Y.~Nesterov,
\newblock ``Primal-dual subgradient methods for convex problems,''
\newblock {\em Mathematical programming}, 2009.

\bibitem{10029946121}
Y.~Nesterov,
\newblock ``A method for solving the convex programming problem with
  convergence rate $\mathcal{O}(1/k^2)$,''
\newblock {\em Dokl. Akad. Nauk SSSR}, 1983.

\bibitem{hamidieh2018data}
K.~Hamidieh,
\newblock ``A data-driven statistical model for predicting the critical
  temperature of a superconductor,''
\newblock {\em Computational Materials Science}, 2018.

\end{thebibliography}

\end{document}